\newtheorem{lemma}{Lemma}
\newtheorem{theorem}{Theorem}
\newtheorem{definition}{Definition}
\newtheorem{conjecture}{Conjecture}
\newtheorem{proposition}{Proposition}
\newcommand{\ds}{\displaystyle}
\newcommand{\dss}{\displaystyle\sum}
\newcommand{\lp}{\left(}
\newcommand{\rp}{\right)}
\title{A tight bound on $\{C_3,C_5\}$-free connected graphs with positive Lin-Lu-Yau Ricci curvature}
\author{E.G.K.M.Gamlath 
\thanks{University of Mississippi, University, MS 38677, ({\tt egkmgamlath@gmail.com})}
\and Xiaonan Liu \thanks{Vanderbilt University, Nashville, TN 37240, ({\tt xiaonan.liu@vanderbilt.edu})} \and Linyuan Lu \thanks{University of South Carolina, Columbia, SC 29208,
({\tt lu@math.sc.edu}).
This author was supported in part by NSF grant DMS 2038080.
}  
\and Xiaofan Yuan \thanks{Arizona State University, Tempe, AZ 85287, ({\tt xiaofan.yuan@asu.edu})
}}
\begin{document}
\maketitle

\begin{abstract}
    In this paper, we prove that any simple $\{C_3,C_5\}$-free non-empty connected graph $G$ with LLY curvature bounded below by $\kappa>0$ has the order at most $2^{\frac{2}{\kappa}}$. This upper bound is achieved if and only if $G$ is a hypercube $Q_d$ and $\kappa=\frac{2}{d}$ for some integer $d\geq 1$.
\end{abstract}
\section{Introduction}
Given a connected finite graph $G$ and $v\in V(G)$, let $N_G(v)$ denote the neighborhood of $v$ in $G$, i.e., $N_G(v) =\{u : vu\in E(G)\}$, and let $N_G[v]:= N_G(v)\cup \{v\}$ denote the closed neighborhood of $v$. The degree of a vertex $v$, denoted by $d_v$, is the number of the neighbors of $v$. I.e.,  $d_v=|N_G(v)|$.
For  $S\subseteq V(G)$, $N_G(S) = \{u \in V(G)\backslash S: us\in E(G) \textrm{ for some } s \in S\}$. 
For any two vertices $u,v \in V(G)$, the \textit{distance} from $u$ to $v$ in $G$, denoted by $d_G(u,v)$, is the number of edges of a shortest path from $u$ to $v$ in $G$. We often ignore the subscripts if $G$ is clear from the context. The \textit{diameter} of a graph $G$ is defined to be $diam(G)=\max\{d_G(u,v): u,v\in V(G)\}$. 

A probability distribution (over the vertex set $V=V(G)$) is a mapping $m: V\to [0,1]$ satisfying $\sum_{x\in V} m(x) = 1$. 
Let $m_1$ and $m_2$ be two probability distributions on $V$. 
A {\em coupling} between $m_1$ and $m_2$ is a mapping $A: V\times V \to [0,1]$ with finite support such that 
$$\dss_{y \in V} A(x,y) = m_1(x) \textrm{ and } \dss_{x\in V} A(x,y) = m_2(y).$$
The \textit{transportation distance} between the two probability distributions $m_1$ and $m_2$ is defined as follows:
$$W(m_1, m_2) = \inf_A \dss_{x,y\in V} A(x,y) d(x,y),$$
where the infimum is taken over all coupling $A$ between $m_1$ and $m_2$. By the duality theorem of a linear optimization problem, the transportation distance can also be expressed as follows:
$$W(m_1, m_2) = \sup_f \dss_{x\in V} f(x) \lp m_1(x)-m_2(x)\rp,$$
where the supremum is taken over all $1$-Lipschitz functions $f$.

A \textit{random walk} $m$ on $G=(V,E)$ is defined as a family of probability measures $\{m_v(\cdot)\}_{v\in V}$ such that $m_v(u) = 0$ for all $u\not \in N[v]$.
 It follows that  $m_v(u) \geq 0$ for all $v,u\in V$ and $\sum_{u\in N[v]} m_v(u) = 1$ for all $v\in V$.  The \textit{Ricci cuvature} $\kappa: \binom{V(G)}{2} \to \mathbb{R}$ of $G$ can then be defined as follows:

\begin{definition}
Given a connected graph $G=(V,E)$, a random walk $m = \{m_v(\cdot)\}_{v\in V}$ on $G$ and two vertices $x,y\in V$,
$$\kappa(x,y) = 1 - \frac{W(m_x, m_y)}{d(x,y)}.$$
Moreover, we say a graph $G$ equipped with a random walk $m$ has Ricci curvature at least $\kappa_0$ if $\kappa(x,y) \geq \kappa_0$ for all $x,y \in V$.
\end{definition}

For $0\leq \alpha < 1$, the {\em $\alpha$-lazy random walk} $m_x^{\alpha}$ (for any vertex $x$), is defined as 
\[
m_x^{\alpha}(v) = \begin{cases} 
                        \alpha & \textrm{ if $v=x$,}\\
                        (1-\alpha)/d(x) &\textrm{ if $v\in N(x)$,}\\
                        0 & \textrm{ otherwise.}
                    \end{cases}
\]
In \cite{LLY}, Lin, Lu, and Yau defined the Ricci curvature of graphs based on the $\alpha$-lazy random walk as $\alpha$ goes to $1$. More precisely,
for any $x,y \in V$, they defined the $\alpha$-Ricci-curvature $\kappa_{\alpha}(x,y)$ to be 
$$\kappa_{\alpha}(x,y) = 1 - \frac{W(m_x^{\alpha}, m_y^{\alpha})}{d(x,y)}$$ and the Lin-Lu-Yau Ricci curvature $\kappa_{\textrm{LLY}}$ of $G$ to be 
\[\kappa_{\textrm{LLY}}(x,y) = \ds\lim_{\alpha \to 1} \frac{\kappa_{\alpha}(x,y)}{(1-\alpha)}.\]
They showed in \cite{LLY} that $\kappa_{\alpha}$ is concave in $\alpha \in [0,1]$ for any two vertices $x,y$. Moreover, 
\[\kappa_{\alpha}(x,y) \leq (1-\alpha) \frac{2}{d(x,y)}\]
for any $\alpha \in [0,1]$ and any two vertices $x$ and $y$. In particular, this implies the following lemma.

\begin{lemma}\cite{LLY, Ollivier}\label{lem:diam}
If for every edge $xy \in E(G)$, $\kappa_{\textrm{LLY}}(x,y) \geq \kappa_0 > 0$, then the diameter of the graph $G$
$$\textrm{diam}(G) \leq \frac{2}{\kappa_0}.$$
\end{lemma}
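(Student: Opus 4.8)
The plan is to obtain the diameter bound by sandwiching $\kappa_{\textrm{LLY}}(x,y)$ between two estimates for a pair $x,y$ that realizes the diameter. On one side, the inequality $\kappa_\alpha(x,y)\le (1-\alpha)\tfrac{2}{d(x,y)}$ quoted just above the statement gives, after dividing by $1-\alpha$ and letting $\alpha\to 1$, the universal upper bound $\kappa_{\textrm{LLY}}(x,y)\le \tfrac{2}{d(x,y)}$ for \emph{every} pair of distinct vertices. On the other side I will show that the hypothesis $\kappa_{\textrm{LLY}}\ge \kappa_0$ on edges propagates to $\kappa_{\textrm{LLY}}(x,y)\ge \kappa_0$ on all pairs. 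Granting both, if $x,y$ satisfy $d(x,y)=\textrm{diam}(G)$ then $\kappa_0\le \kappa_{\textrm{LLY}}(x,y)\le \tfrac{2}{\textrm{diam}(G)}$, which rearranges to $\textrm{diam}(G)\le \tfrac{2}{\kappa_0}$.

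The heart of the argument is the propagation of the lower bound from edges to arbitrary pairs, which I would carry out along a geodesic. Fix vertices $x,y$, set $\ell:=d(x,y)$, and choose a shortest path $x=z_0,z_1,\dots,z_\ell=y$, so each $z_iz_{i+1}$ is an edge. Since the transportation distance $W$ is a genuine metric on probability measures (the triangle inequality is immediate from the dual $1$-Lipschitz formulation given in the excerpt, or from gluing optimal couplings), I would write
$$W(m_x^{\alpha}, m_y^{\alpha}) \le \sum_{i=0}^{\ell-1} W(m_{z_i}^{\alpha}, m_{z_{i+1}}^{\alpha}).$$
Because $d(z_i,z_{i+1})=1$, the definition of $\kappa_\alpha$ gives $W(m_{z_i}^{\alpha}, m_{z_{i+1}}^{\alpha}) = 1-\kappa_\alpha(z_i,z_{i+1})$, and substituting yields
$$\kappa_\alpha(x,y) = 1 - \frac{W(m_x^{\alpha}, m_y^{\alpha})}{\ell} \ge \frac{1}{\ell}\sum_{i=0}^{\ell-1}\kappa_\alpha(z_i,z_{i+1}).$$
Dividing by $1-\alpha$ and passing to the limit $\alpha\to 1$ then gives
$$\kappa_{\textrm{LLY}}(x,y) \ge \frac{1}{\ell}\sum_{i=0}^{\ell-1}\kappa_{\textrm{LLY}}(z_i,z_{i+1}) \ge \min_{0\le i<\ell}\kappa_{\textrm{LLY}}(z_i,z_{i+1}) \ge \kappa_0,$$
where the last inequality is the edgewise hypothesis. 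This is exactly the pairwise lower bound needed in the first paragraph.

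The step I expect to require the most care is the interchange of the limit $\alpha\to 1$ with the averaging inequality: one must check that each quotient $\kappa_\alpha(z_i,z_{i+1})/(1-\alpha)$ converges to $\kappa_{\textrm{LLY}}(z_i,z_{i+1})$ and that the limit of the finite sum is the sum of the limits. Both are routine here since the number of terms $\ell$ is fixed and finite, and the stated concavity of $\alpha\mapsto\kappa_\alpha$ guarantees the relevant limits exist; still, it is the one place where the argument could silently go wrong, so I would state it explicitly. Everything else reduces to the two structural facts already supplied in the excerpt, namely the metric property of $W$ and the bound $\kappa_\alpha(x,y)\le(1-\alpha)\tfrac{2}{d(x,y)}$, so no machinery beyond the stated definitions is required.
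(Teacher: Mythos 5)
Your proposal is correct and is essentially the derivation the paper intends: the paper presents this lemma as an immediate consequence of the quoted bound $\kappa_\alpha(x,y)\le(1-\alpha)\frac{2}{d(x,y)}$ together with the edge-to-pair propagation stated as Lemma~\ref{lem:adj-pair}, both cited to \cite{LLY, Ollivier} rather than proved. Your geodesic/triangle-inequality argument for the propagation step is the standard proof of Lemma~\ref{lem:adj-pair} from those references, so you have simply made the cited ingredients self-contained.
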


Although the Ricci curvature $\kappa_{\textrm{LLY}}(x,y)$ is defined for all pairs $x,y \in V(G)$, it suffices to consider only $\kappa_{\textrm{LLY}}(x,y)$ for $xy\in E(G)$ due to the following lemma.

\begin{lemma}\cite{LLY, Ollivier}\label{lem:adj-pair}
Let $G$ be a connected graph. If $\kappa_{\textrm{LLY}}(x,y) \geq \kappa_0$ for any edge $xy\in E(G)$, then $\kappa_{\textrm{LLY}}(x,y) \geq \kappa_0$ for any pair of vertices $\{x,y\}$.
\end{lemma}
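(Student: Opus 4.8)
The plan is to reduce the general pairwise bound to the edge hypothesis by establishing a telescoping (superadditivity) inequality for the $\alpha$-Ricci curvature along a geodesic, and then letting $\alpha \to 1$. First I would fix two vertices $x, y$ with $d(x,y) = n \geq 2$ and pick a shortest path $x = v_0, v_1, \ldots, v_n = y$, so that $d(v_i, v_{i+1}) = 1$ for every $i$.

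The key tool is that the transportation distance $W$ is a genuine metric on probability distributions over $(V,d)$; the triangle inequality $W(m_1, m_3) \leq W(m_1, m_2) + W(m_2, m_3)$ follows from gluing two optimal couplings. Iterating it along the geodesic gives
$$W(m_x^{\alpha}, m_y^{\alpha}) \leq \sum_{i=0}^{n-1} W(m_{v_i}^{\alpha}, m_{v_{i+1}}^{\alpha}).$$
Each consecutive pair is an edge, so by definition $W(m_{v_i}^{\alpha}, m_{v_{i+1}}^{\alpha}) = 1 - \kappa_{\alpha}(v_i, v_{i+1})$. Substituting and dividing by $n$, I get the averaging bound
$$\kappa_{\alpha}(x,y) = 1 - \frac{W(m_x^{\alpha}, m_y^{\alpha})}{n} \geq \frac{1}{n}\sum_{i=0}^{n-1}\kappa_{\alpha}(v_i, v_{i+1}).$$

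I would then divide both sides by $1-\alpha$ and take $\alpha \to 1$. Each edge limit $\lim_{\alpha \to 1}\kappa_{\alpha}(v_i, v_{i+1})/(1-\alpha) = \kappa_{\textrm{LLY}}(v_i, v_{i+1})$ exists (the defining limit is guaranteed by the concavity of $\kappa_{\alpha}$ in $\alpha$ together with $\kappa_1 \equiv 0$), so the limit of the $n$-term average is the average of the limits, and the inequality passes to the limit. This yields
$$\kappa_{\textrm{LLY}}(x,y) \geq \frac{1}{n}\sum_{i=0}^{n-1}\kappa_{\textrm{LLY}}(v_i, v_{i+1}) \geq \frac{1}{n}\cdot n \kappa_0 = \kappa_0,$$
the last step using the hypothesis that every edge has curvature at least $\kappa_0$.

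The only step requiring genuine care is the triangle inequality: I must make sure $W$ is treated as a metric (equivalently, that couplings can be composed) rather than as a quantity defined only pairwise, and that the telescoping is valid simultaneously for all the lazy walks $m_{v_i}^{\alpha}$ at a fixed $\alpha$. The interchange of the limit with the finite sum is then routine, since only $n$ terms are involved and each limit exists. Note that no structural hypothesis on $G$ (such as $\{C_3, C_5\}$-freeness) enters here; the lemma holds for every connected graph.
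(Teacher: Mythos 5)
Your proof is correct, and it needs no repair: the transportation-distance triangle inequality applied along a geodesic, telescoping over the edges, dividing by $1-\alpha$, and passing to the limit $\alpha\to 1$ term by term is exactly the standard argument. The paper itself offers no proof of this lemma (it is quoted from the cited references \cite{LLY, Ollivier}), and your argument coincides with the proof given in those sources, so there is nothing genuinely different to compare.
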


M\"{u}nch and Wojciechowski \cite{MW} gave a limit-free formulation of the Lin-Lu-Yau Ricci curvature using \textit{graph Laplacian}. For a graph $G = (V,E)$, the combinatorial graph Laplacian $\Delta$ is defined as: 
$$\Delta f(x)=\frac{1}{d_x}\sum\limits_{y\in N(x)} (f(y)-f(x)). $$

\begin{theorem}\label{thm:curvature_laplacian}\cite{MW} (Curvature via the Laplacian) Let $G$ be a simple graph and let $x \neq y \in V(G)$. Then 

\begin{equation*}
    \kappa_{\textrm{LLY}}(x,y) = \inf_{\substack{f \in Lip(1)\\ \nabla_{yx}f = 1}} \nabla_{xy} \Delta f, 
\end{equation*}
where $\nabla_{xy}f=\frac{f(x)-f(y)}{d(x,y)}$. 
\end{theorem}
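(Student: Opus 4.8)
The plan is to substitute the dual (Kantorovich) form of the transportation distance into the definition of $\kappa_\alpha(x,y)$ and evaluate the resulting functional explicitly. For $f\in Lip(1)$ and the $\alpha$-lazy walks, writing $\frac{1}{d_x}\sum_{z\in N(x)}f(z)=f(x)+\Delta f(x)$ gives $\sum_{z\in V}f(z)\,m_x^\alpha(z)=f(x)+(1-\alpha)\Delta f(x)$, and likewise at $y$. Subtracting,
\[
\sum_{z\in V}f(z)\bigl(m_y^\alpha(z)-m_x^\alpha(z)\bigr)=d(x,y)\left[\nabla_{yx}f-(1-\alpha)\,\nabla_{xy}\Delta f\right].
\]
Since $Lip(1)$ is closed under $f\mapsto -f$, the supremum defining $W(m_x^\alpha,m_y^\alpha)$ is unchanged if we replace $m_x^\alpha-m_y^\alpha$ by $m_y^\alpha-m_x^\alpha$; dividing by $d(x,y)$ and using $\kappa_\alpha=1-W/d(x,y)$ yields the central identity
\[
\frac{\kappa_\alpha(x,y)}{1-\alpha}=\inf_{f\in Lip(1)}\left[\frac{1-\nabla_{yx}f}{1-\alpha}+\nabla_{xy}\Delta f\right].
\]
Everything then reduces to computing the $\alpha\to1$ limit of this infimum.

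Because $f\in Lip(1)$ forces $\nabla_{yx}f\le1$, the penalty $\frac{1-\nabla_{yx}f}{1-\alpha}$ is nonnegative, and the theorem is the statement that, as $\alpha\to1$, the infimum concentrates on the constraint set $\{\nabla_{yx}f=1\}$. The upper bound is immediate: for any fixed $f^0\in Lip(1)$ with $\nabla_{yx}f^0=1$ the penalty vanishes identically in $\alpha$, so $\frac{\kappa_\alpha(x,y)}{1-\alpha}\le\nabla_{xy}\Delta f^0$; taking the infimum over all such $f^0$ and then $\limsup_{\alpha\to1}$ gives $\limsup_{\alpha\to1}\frac{\kappa_\alpha(x,y)}{1-\alpha}\le\inf_{\nabla_{yx}f=1}\nabla_{xy}\Delta f$.

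The matching lower bound is the only genuine obstacle, and it is where finiteness of $G$ is used. After the harmless normalization $f(x)=0$, the $1$-Lipschitz functions form a compact subset of $\mathbb{R}^{V}$ (every value is bounded by $\mathrm{diam}(G)$) on which the bracketed functional is continuous, so for each $\alpha$ a minimizer $f_\alpha$ exists. Since $|\Delta f|\le1$ on $Lip(1)$, the term $\nabla_{xy}\Delta f$ is uniformly bounded, hence the infimum is bounded below and $\liminf_{\alpha\to1}\frac{\kappa_\alpha(x,y)}{1-\alpha}$ is finite. Choosing $\alpha_n\to1$ attaining this liminf and passing to a convergent subsequence $f_{\alpha_n}\to f^\ast\in Lip(1)$, I claim $\nabla_{yx}f^\ast=1$: otherwise $1-\nabla_{yx}f_{\alpha_n}$ would stay bounded away from $0$ while $1-\alpha_n\to0$, so the penalty would diverge to $+\infty$ against a bounded $\nabla_{xy}\Delta f_{\alpha_n}$, contradicting finiteness of the liminf. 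Discarding the nonnegative penalty and using continuity of $f\mapsto\nabla_{xy}\Delta f$ then gives
\[
\liminf_{\alpha\to1}\frac{\kappa_\alpha(x,y)}{1-\alpha}\ge\nabla_{xy}\Delta f^\ast\ge\inf_{\nabla_{yx}f=1}\nabla_{xy}\Delta f.
\]
Together with the upper bound, and since $\liminf\le\limsup$ always, this forces $\limsup=\liminf=\inf_{\nabla_{yx}f=1}\nabla_{xy}\Delta f$, so the limit exists and equals $\kappa_{\textrm{LLY}}(x,y)$, as claimed.
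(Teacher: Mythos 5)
Your proof is correct, but there is nothing in the paper to compare it against: the paper does not prove this statement at all, it simply quotes it from M\"unch and Wojciechowski \cite{MW}. So your argument should be judged on its own, and it holds up in the finite-graph setting the paper works in. The central identity
\[
\frac{\kappa_\alpha(x,y)}{1-\alpha}=\inf_{f\in Lip(1)}\left[\frac{1-\nabla_{yx}f}{1-\alpha}+\nabla_{xy}\Delta f\right],
\]
derived from Kantorovich duality together with the computation $\sum_{z}f(z)\,m_x^\alpha(z)=f(x)+(1-\alpha)\Delta f(x)$, is exactly right; the upper bound is immediate; and the lower bound via compactness is sound: translation invariance of the functional justifies the normalization $f(x)=0$, the normalized $1$-Lipschitz class is a compact subset of $\mathbb{R}^V$ for a finite connected graph, the bound $|\Delta f|\le 1$ on $Lip(1)$ makes $\nabla_{xy}\Delta f$ uniformly bounded, and the diverging-penalty contradiction correctly forces any subsequential limit $f^{\ast}$ of the minimizers to satisfy $\nabla_{yx}f^{\ast}=1$ (this also shows the constraint set is nonempty, which one can alternatively see directly from $f=d(x,\cdot)$). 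Two remarks on what the approaches buy. Your argument is elementary and, as a byproduct, re-proves existence of the limit defining $\kappa_{\textrm{LLY}}$, which \cite{LLY} obtained via concavity of $\kappa_\alpha$ in $\alpha$; so you do not need to presuppose that result. On the other hand, the theorem in \cite{MW} is proved for general (possibly infinite, locally finite) graphs and more general Laplacians, where your compactness step fails and must be replaced by a genuinely different argument; since this paper only considers finite connected graphs, that loss of generality is harmless here.
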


In this paper, we will focus on the Lin-Lu-Yau (or LLY for short) Ricci curvature. Let $G$ be a graph such that every edge $xy$ of $G$ has $\kappa_{LLY}(x,y)\ge \kappa>0$. It is a natural problem to consider bounding the order of $G$. In \cite{LLY}, Lin, Lu, and Yau gave an upper bound for the order of such graphs in terms of the maximum degree and the minimum LLY Ricci curvature $\kappa$.

\begin{theorem}\cite{LLY} Let $G$ be a connected graph with maximum degree $\Delta$.
    Suppose that for any $xy\in E(G)$, $\kappa_{LLY}(x,y)\ge \kappa >0$. Then
    $$|V(G)|\le 1+ \sum_{j=1}^{\lfloor 2/\kappa \rfloor} \Delta^j \prod_{i=1}^{j-1} (1-i \frac{\kappa}{2}).$$
\end{theorem}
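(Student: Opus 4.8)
The plan is to root the graph at an arbitrary vertex $v_0$ and bound the size of each distance sphere $S_j := \{x \in V(G) : d(v_0,x)=j\}$, writing $s_j := |S_j|$. By Lemma \ref{lem:diam} every vertex lies within distance $\lfloor 2/\kappa\rfloor$ of $v_0$, so that $|V(G)| = 1 + \sum_{j=1}^{\lfloor 2/\kappa\rfloor} s_j$, and it suffices to prove the per-sphere estimate $s_j \le \Delta^j \prod_{i=1}^{j-1}(1-i\kappa/2)$. For a vertex $x \in S_j$ I would split its neighbors by level, letting $a_x, b_x, c_x$ count the neighbors at distance $j-1$, $j$, $j+1$ from $v_0$ respectively, so that $a_x+b_x+c_x = d_x \le \Delta$ and the number of edges between $S_j$ and $S_{j+1}$ equals $\sum_{x\in S_j} c_x$.

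First I would extract a single scalar curvature inequality from Theorem \ref{thm:curvature_laplacian}. Testing the infimum at the $1$-Lipschitz function $f = -d(v_0,\cdot)$, which satisfies $\nabla_{yx}f = 1$ for any edge $xy$ with $x$ the farther endpoint, gives $\Delta f(x) - \Delta f(y) \ge \kappa$; a direct computation shows $\Delta f(x) = (a_x - c_x)/d_x$, so that for every such oriented edge
\[
\frac{a_x - c_x}{d_x} - \frac{a_y - c_y}{d_y} \ge \kappa.
\]
Next I would telescope this along a geodesic $v_0 = u_0, u_1, \ldots, u_j = x$: summing over the $j$ consecutive edges and using that the root has $a_{v_0}=0$ and $c_{v_0}=d_{v_0}$, so that $(a_{v_0}-c_{v_0})/d_{v_0} = -1$, yields for every $x \in S_j$ the bound
\[
\frac{a_x - c_x}{d_x} \ge j\kappa - 1.
\]

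The crucial algebraic step is then to convert this into a sharp forward-degree bound. Substituting $a_x = d_x - b_x - c_x$ and rearranging turns the last display into $2c_x + b_x \le (2-j\kappa)d_x$, hence $c_x \le (1 - j\kappa/2)\,d_x \le \Delta(1 - j\kappa/2)$. This is exactly where the factor $\kappa/2$ of the target product emerges, and I expect confirming that this is the correct and sharp constant to be the main obstacle, together with justifying that the chosen test function legitimately furnishes a valid lower bound in the infimum of Theorem \ref{thm:curvature_laplacian}. Summing the forward-degree bound over $x \in S_j$ controls the forward edges, and since each vertex of $S_{j+1}$ has at least one inward neighbor we obtain $s_{j+1} \le \sum_{x\in S_j} c_x \le \Delta(1 - j\kappa/2)\, s_j$. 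Unfolding this recursion from $s_0 = 1$ gives $s_j \le \Delta^j \prod_{i=1}^{j-1}(1 - i\kappa/2)$, where the factors are positive since $i \le j-1 < 2/\kappa$, and summing over $j$ from $1$ to $\lfloor 2/\kappa\rfloor$ produces the claimed inequality. The remaining work is bookkeeping: verifying the telescoped inequality edge-by-edge along a geodesic and checking that the range of $j$ matches the diameter bound.
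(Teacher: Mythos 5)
Your proposal is correct, and in substance it follows the same route as the original Lin--Lu--Yau argument that the paper merely cites (the paper gives no proof of this theorem, only a reference to \cite{LLY}). The heart of your argument --- the bound $c_x+\tfrac12 b_x\le\bigl(1-\tfrac{j\kappa}{2}\bigr)d_x$ for $x$ at distance $j$ from the root --- is exactly inequality \eqref{eq:i} stated in the paper's remark following Lemma~\ref{lem:C3C5free}, namely $|\Gamma^+_{v_0}(x)|+\tfrac12|\Gamma^0_{v_0}(x)|\le\bigl(1-\tfrac{j\kappa}{2}\bigr)d_x$, and your sphere-by-sphere counting $s_{j+1}\le\sum_{x\in S_j}c_x\le\Delta\bigl(1-\tfrac{j\kappa}{2}\bigr)s_j$, unfolded from $s_0=1$ and cut off at $\lfloor 2/\kappa\rfloor$ by Lemma~\ref{lem:diam}, is the standard way to convert it into the volume bound (the paper's Section~\ref{sec:pf_main1} performs the same kind of level counting, with edges in place of vertices). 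Where you genuinely differ is in how the key inequality is obtained: you test Theorem~\ref{thm:curvature_laplacian} with the single global function $f=-d(v_0,\cdot)$ and telescope $\Delta f(u_i)-\Delta f(u_{i-1})\ge\kappa$ along a geodesic, whereas the paper (in its strengthened Lemma~\ref{lem:C3C5free}) and LLY proceed by induction on the distance, constructing a new local test function at each level. Your telescoping is sound: $f$ is globally $1$-Lipschitz, $\nabla_{u_{i-1}u_i}f=1$ across every geodesic edge, and since $\kappa_{LLY}(x,y)$ is the \emph{infimum} of $\nabla_{xy}\Delta f$ over admissible $f$, every admissible test function satisfies $\nabla_{xy}\Delta f\ge\kappa_{LLY}(x,y)\ge\kappa$ --- so the point you flagged as a possible obstacle is immediate, and the constant $\kappa/2$ falls out of the algebra exactly as you computed. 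What your cleaner, induction-free derivation does \emph{not} buy is the improved coefficient $1$ (rather than $\tfrac12$) on $|\Gamma^0|$ that the paper needs in the $\{C_3,C_5\}$-free setting; that strengthening is precisely why the paper's Lemma~\ref{lem:C3C5free} resorts to the more delicate shifted test functions and induction.
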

We study graphs $G$ containing no $C_3$ or $C_5$ as a subgraph, such that every edge $xy$ of $G$ has $\kappa_{LLY}(x,y)\ge \kappa>0$, and we show an upper bound of $|V(G)|$ in terms of the minimum LLY Ricci curvature $\kappa$.
\begin{theorem}\label{thm:main1}
Suppose $G$ is a simple connected graph containing no $C_3$ or $C_5$ as a subgraph. If for every edge $xy$ in $G$, $\kappa_{LLY}(x,y)\ge \kappa >0$, then $$|V(G)|\le 2^{2/\kappa}.$$
Moreover, the equality holds if and only if $G$ is isomorphic to the hypercube $Q_d$ and $\kappa=\frac{2}{d}$ for some positive integer $d$.
\end{theorem}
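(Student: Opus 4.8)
The plan is to couple a local, single--edge analysis of the curvature with a global BFS layer--count, guided by the fact that a hypercube $Q_d$ is $d$--regular and bipartite, has diameter $d$, and has BFS layer sizes $\binom{d}{j}$ summing to $2^d$. First I would analyze $\kappa_{\mathrm{LLY}}(x,y)$ on a single edge $xy$ using triangle--freeness: since $G$ has no $C_3$, the sets $N(x)\setminus\{y\}$ and $N(y)\setminus\{x\}$ are disjoint, and the relevant object is the bipartite ``$C_4$--graph'' $H_{xy}$ joining $u\in N(x)\setminus\{y\}$ to $w\in N(y)\setminus\{x\}$ whenever $uw\in E(G)$. Evaluating the optimal transport between $m_x^\alpha$ and $m_y^\alpha$ (or, equivalently, testing the Münch--Wojciechowski formula of Theorem~\ref{thm:curvature_laplacian} with signed--distance functions), I expect an expression for $\kappa_{\mathrm{LLY}}(x,y)$ in terms of $d_x,d_y$ and the maximum matching of $H_{xy}$, with two consequences: (i) the bound $\kappa_{\mathrm{LLY}}(x,y)\le \tfrac{2}{\max(d_x,d_y)}$, which applied at a maximum--degree vertex gives $\Delta\le 2/\kappa$; and (ii) that a \emph{positive} lower bound forces $H_{xy}$ to contain a near--perfect matching. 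I would also show here that $G$ is bipartite: a shortest odd cycle would have length $\ge 7$ (as $C_3,C_5$ are excluded), and such a configuration cannot coexist with every incident edge having curvature $\ge\kappa>0$.

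Next comes the global count. Fix a root $v_0$ and let $S_j$ be the $j$--th BFS layer; bipartiteness forces every edge to join consecutive layers, so for $v$ with $d(v_0,v)=j$ we may split $d_v=\mathrm{up}(v)+\mathrm{down}(v)$. The heart of the argument is the claim $\mathrm{up}(u)\ge j$ for all $u\in S_j$, proved by induction along a shortest path: given an edge $uv$ with $v\in S_{j-1},u\in S_j$, the saturating matching in $H_{uv}$ sends each up--neighbour $p\in S_{j-2}$ of $v$ to some $q\in N(u)$ with $p\sim q$; the constraints $p\sim q\sim u$ force $q\in S_{j-1}$, so $q$ is an up--neighbour of $u$, and distinct $p$'s give distinct $q$'s. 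With $v$ itself this yields $\mathrm{up}(u)\ge 1+\mathrm{up}(v)\ge j$, and it is precisely here that $C_5$--freeness is used, to guarantee the matching can be taken to saturate the up--neighbours. Granting the claim, $\mathrm{down}(v)\le \Delta-j$ on $S_j$ while $\mathrm{up}(u)\ge j+1$ on $S_{j+1}$, so double--counting edges between the layers gives
\[
(j+1)\,|S_{j+1}|\ \le\ e(S_j,S_{j+1})\ \le\ (\Delta-j)\,|S_j|,
\]
whence $|S_j|\le\binom{\Delta}{j}$ by induction from $|S_0|=1$. Summing and using $\Delta\le 2/\kappa$,
\[
|V(G)|=\sum_{j\ge 0}|S_j|\ \le\ \sum_{j=0}^{\Delta}\binom{\Delta}{j}\ =\ 2^{\Delta}\ \le\ 2^{2/\kappa}.
\]

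For the equality case I would trace the two inequalities $|V(G)|\le 2^{\Delta}\le 2^{2/\kappa}$. Equality forces $\Delta=2/\kappa=:d$ (so $d\in\mathbb{Z}_{\ge1}$, $\kappa=2/d$) and $|S_j|=\binom{d}{j}$ for every $j$, which makes each step above tight: $G$ is $d$--regular, $\mathrm{up}(u)=d(v_0,u)$ and $\mathrm{down}(v)=d-d(v_0,v)$ everywhere, and every edge carries a perfect neighbourhood matching. A $d$--regular bipartite $\{C_3,C_5\}$--free graph with this exact layer/degree profile and diameter $d$ must be $Q_d$: the perfect matchings reconstruct $d$ commuting ``coordinate flips'' that identify $V(G)$ with $\{0,1\}^d$. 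Conversely $Q_d$ realizes equality with $\kappa=2/d$.

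I expect the main obstacle to be the up--degree claim $\mathrm{up}(u)\ge d(v_0,u)$ — concretely, upgrading ``positive curvature yields a large matching'' to ``the matching saturates the up--neighbours,'' which is exactly where the $C_5$--free hypothesis must enter and where the irregular, non--hypercube edges (as in stars or in $K_{2,3}$) make the bookkeeping delicate. Pinning down the sharp local curvature formula, including its upper bound for irregular edges, and the bipartiteness reduction are the remaining technical points on which the whole argument rests.
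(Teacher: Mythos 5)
Your proposal reconstructs, in essence, a matching-based layer count that works for \emph{regular} graphs, but its two load-bearing claims fail (or are unproven) exactly in the irregular case, which is the hard part of Theorem~\ref{thm:main1}. The decisive gap is the claim $\mathrm{up}(u)\ge j$ for $u\in S_j$, derived from a matching in $H_{uv}$ that saturates the up-neighbours of the closer endpoint. Positive curvature yields a perfect matching between $N(u)\setminus\{v\}$ and $N(v)\setminus\{u\}$ only when these sets have equal size: the paper's Lemma~\ref{lem:matching} is stated and proved only for $d$-regular graphs, and for unequal degrees the Hall-type argument gives at best a deficiency bound, not saturation. A concrete counterexample to your claim is the star $K_{1,n}$: it is $\{C_3,C_5\}$-free and every edge has $\kappa_{LLY}=2/n>0$ (take $f=0$ at the center, $1$ at the chosen leaf, $-1$ at all other leaves in Theorem~\ref{thm:curvature_laplacian}), yet in a BFS from a leaf $v_0$ every other leaf $u\in S_2$ has $\mathrm{up}(u)=1<2$, the graph $H_{cu}$ (with $c$ the center) has no edges at all, and your displayed inequality $(j+1)\,|S_{j+1}|\le e(S_j,S_{j+1})$ reads $2(n-1)\le n-1$, which is false. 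So the induction behind $|S_j|\le\binom{\Delta}{j}$ collapses precisely at the point you flagged as the ``main obstacle''; that obstacle is not bookkeeping but a genuinely false intermediate statement. Your bipartiteness reduction is a second unproven step: you assert that an odd cycle of length $\ge 7$ ``cannot coexist'' with positive curvature, but no argument is given, and the paper never proves (nor needs) bipartiteness in the general case --- it only emerges a posteriori in the equality analysis.

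The paper avoids both issues by not using matchings at all in the general bound. Its key tool is the curvature-weighted inequality of Lemma~\ref{lem:C3C5free}, $|\Gamma^+_x(y)|+|\Gamma^0_x(y)|\le\bigl(1-\tfrac{i\kappa}{2}\bigr)d_y$, proved by induction on $i=d(x,y)$ with explicit Lipschitz test functions (this is where $C_3$- and $C_5$-freeness enter). This inequality scales with the actual degree $d_y$, tolerates within-layer edges $\Gamma^0$ without any bipartiteness assumption, and feeds an edge count $|E(G)|\le\sum_i\binom{2/\kappa-1}{i}\,\delta$ against the \emph{minimum} degree $\delta$, from which $|V(G)|\le 2^{2/\kappa}$ follows; a separate Taylor-remainder estimate (Lemma~\ref{lem:noninteger}) handles non-integer $2/\kappa$ and yields strict inequality for irregular $G$. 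Perfect matchings (Lemma~\ref{lem:matching}) and the constant-curvature statement (Proposition~\ref{prop:regular_bipartite_cur}) are invoked only where they are valid --- in the regular/equality case --- to get $\kappa=2/d$ and to build the isomorphism with $Q_d$. To rescue your outline you would need a degree-sensitive quantitative replacement for your saturation claim; that replacement is exactly Lemma~\ref{lem:C3C5free}.
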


We conjecture that the same upper bound holds for $C_3$-free graphs.
\begin{conjecture}
  Let $G$ be a connected $C_3$-free graph. If for every edge $xy$ in $G$, $\kappa_{LLY}(x,y)\ge \kappa >0$, then $|V(G)|\le 2^{2/\kappa}$.   
\end{conjecture}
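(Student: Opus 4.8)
The plan is to root $G$ at a well-chosen vertex $r$ and run a discrete Bishop--Gromov volume comparison on the BFS layers $L_i=\{w:d(r,w)=i\}$. By Lemma~\ref{lem:diam} there are at most $\lfloor 2/\kappa\rfloor+1$ nonempty layers, so it suffices to establish a sphere-ratio inequality of the form
\begin{equation}
\frac{|L_{i+1}|}{|L_i|}\ \le\ \frac{2/\kappa-i}{i+1}\qquad(0\le i<2/\kappa),\tag{$\dagger$}
\end{equation}
since $(\dagger)$ yields $|L_i|\le\binom{2/\kappa}{i}$ and hence $|V(G)|=\sum_i|L_i|\le\sum_{i\ge0}\binom{2/\kappa}{i}=2^{2/\kappa}$ (the non-integer exponent being handled by a routine estimate on the truncated binomial series); when $2/\kappa=d\in\mathbb Z$ this is the hypercube profile $\sum_i\binom di=2^d$. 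The one ingredient that is available for \emph{every} connected graph is a pointwise curvature inequality from Theorem~\ref{thm:curvature_laplacian}. Writing $d^+(w)=|N(w)\cap L_{i+1}|$, $d^-(w)=|N(w)\cap L_{i-1}|$ for $w\in L_i$, the distance function $f=d(r,\cdot)$ is $1$-Lipschitz, and for an edge $uv$ with $u\in L_i,\ v\in L_{i+1}$ it is an admissible competitor with $\nabla_{vu}f=1$; since same-layer neighbors contribute $0$ to the Laplacian, $\Delta f(w)=\bigl(d^+(w)-d^-(w)\bigr)/d_w$, so that the infimum characterization forces
\begin{equation}
\frac{d^+(u)-d^-(u)}{d_u}-\frac{d^+(v)-d^-(v)}{d_v}\ \ge\ \kappa\tag{$\star$}
\end{equation}
on every upward edge. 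Following a geodesic out of $r$, $(\star)$ already reproves the diameter bound; the real task is to upgrade this \emph{depth} control into the \emph{width} control $(\dagger)$.

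The second step is to extract, from $C_3$-freeness, the combinatorial structure that drives $(\dagger)$. Triangle-freeness gives $N(x)\cap N(y)=\emptyset$ on every edge, so $N[x]\cap N[y]=\{x,y\}$ and the only cost-free mass in any coupling of $m_x^{\alpha},m_y^{\alpha}$ sits on the pair $\{x,y\}$. Running the optimization of Theorem~\ref{thm:curvature_laplacian} on a triangle-free edge $xy$, with $f(x)=0,f(y)=1$ and $A=N(x)\setminus\{y\}$, $B=N(y)\setminus\{x\}$, one wants to push $f|_A\to -1$ and $f|_B\to 2$; the value of the infimum is therefore decided by the Lipschitz constraints that bind \emph{between} $A$ and $B$. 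These are of exactly two kinds: the square-edges $a\sim b$ (the $4$-cycles through $xy$), and the distance-$2$ pairs $a,b$ sharing a common neighbor $c\neq x,y$ (which, since $x\sim a\sim c\sim b\sim y\sim x$, are precisely the $C_5$'s through $xy$). Applied to an upward edge $uv$, the square-edges pair the up-neighbors and down-neighbors of $u$ and $v$, and this pairing is the mechanism that in the hypercube realizes the coordinate-flip bijections and produces the binomial profile. The plan is to convert this into $(\dagger)$ by a counting/Kruskal--Katona-type argument: starting from $|L_{i+1}|\le\sum_{u\in L_i}d^+(u)$, use $(\star)$ together with the square pairing to show that the average ratio of up-degree to down-degree on $L_i$ decays like $(2/\kappa-i)/i$.

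The main obstacle — and precisely why the statement remains a conjecture — is the second family of binding constraints, the $C_5$'s, together with the same-layer edges they entail. By a parity argument any odd cycle must place an edge inside a single BFS layer, and a $C_5$ is the shortest odd configuration compatible with $C_3$-freeness; such an edge is invisible to the competitor $f=d(r,\cdot)$ used in $(\star)$ (it contributes $0$ to the Laplacian), so $(\star)$ simply cannot see it. More sharply, a $C_5$ through an edge produces a distance-$2$ cross-constraint between the neighborhoods $A$ and $B$ that binds in the curvature optimization alongside the genuine square-edges, so the clean correspondence ``curvature $\leftrightarrow$ square matching'' on which the counting step rests breaks down: an up-neighbor may be matched ambiguously and the per-layer degree identity underlying $(\dagger)$ can fail. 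This is exactly the difficulty that the companion theorem sidesteps by forbidding $C_5$: with no $C_5$ the only binding cross-constraints are the $C_4$'s, the pairing is a genuine matching, and $(\dagger)$ goes through. To attack the conjecture I would therefore try to argue globally rather than layer-by-layer — for instance by a convexity/averaging argument on the potential $g(w)=\Delta f(w)$ that exploits the concavity of $\alpha\mapsto\kappa_\alpha$, or by an entropy sub-multiplicativity estimate on the sequence $(|L_i|)$ that does not require a vertex-level matching — but I expect that either certifying $(\dagger)$ in the presence of $C_5$'s, or producing a $1$-Lipschitz test function that genuinely controls same-layer edges, will be the decisive and hardest point.
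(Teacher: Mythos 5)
You were asked about the paper's first conjecture, and it is important to be clear that the paper contains \emph{no proof} of this statement: the authors establish the bound $2^{2/\kappa}$ only under the stronger hypothesis that $G$ is $\{C_3,C_5\}$-free (Theorem~\ref{thm:main1}), and they explicitly leave the $C_3$-free case open. Your proposal does not close it either, and to your credit it does not pretend to: the decisive inequality $(\dagger)$ is stated as a target and never derived, so everything downstream (the binomial layer profile, the summation to $2^{2/\kappa}$) is conditional. The genuine gap is exactly where you locate it: $(\star)$ is a ``depth'' inequality obtained from the test function $f=d(r,\cdot)$, and same-layer edges contribute $0$ to $\Delta f$, so no amount of telescoping $(\star)$ along geodesics can control the widths $|L_i|$ once odd cycles (equivalently, intra-layer edges) are permitted. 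One smaller inaccuracy: calling the non-integer case ``a routine estimate on the truncated binomial series'' undersells it — in the paper's own theorem the truncated sum \emph{overshoots} $2^s$, which is why Lemma~\ref{lem:noninteger} must be paired with a regular/non-regular dichotomy and the bound $\kappa\le 2/\delta$ from \eqref{eq:C3db}; any vertex-level version of your $(\dagger)$ would owe a similarly careful check.

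As a diagnosis, however, your analysis is accurate and matches the paper's mechanics closely. Your $(\star)$ is correct and is the single-edge form of the LLY estimate \eqref{eq:i}. Your classification of the binding Lipschitz constraints between $A=N(x)\setminus\{y\}$ and $B=N(y)\setminus\{x\}$ into $C_4$-edges and distance-$2$ pairs arising from $C_5$'s identifies precisely the two places where the paper spends $C_5$-freeness: in Lemma~\ref{lem:C3C5free}, the shifted test function is $1$-Lipschitz only because $d(\Gamma_x^-(u),\Gamma_x^0(y))\ge 3$, which is what upgrades the coefficient $\frac{1}{2}$ on $|\Gamma_x^0(y)|$ in \eqref{eq:i} to the coefficient $1$ in \eqref{eq:C3C5i}; and in Lemma~\ref{lem:matching}, the Hall-violator function is $1$-Lipschitz only because $C_5$-freeness forces $d_G(v_1,v_2)\ge 3$ between $S$ and $Y\setminus N_H(S)$, which is what makes the perfect-matching (your ``square pairing'') conclusion available. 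Your parity remark — that any odd cycle forces an edge inside a BFS layer, invisible to $f=d(r,\cdot)$ — is exactly why $|\Gamma_x^0|$ is the obstruction. So treat your write-up as a correct account of why the conjecture is open and of what a proof must overcome (controlling $\Gamma_x^0$ without $C_5$-freeness), not as a proof attempt with a repairable hole.
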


If we drop the condition of $C_3$-free,
then the statement is no longer true as evidenced by $K_n$ (and many other examples).  
Note $K_n$ has a constant curvature $1+\frac{1}{n}>p$, for any $0<p\leq 1$.
But $n$ could be arbitrarily large.
The correct statement must involve
the maximum degree.
We have the following conjecture.
\begin{conjecture}
   There exists a positive constant $C$ such that for any graph $G$
   with maximum degree $\Delta$ and positive LLY Ricci curvature everywhere, then
   $|V(G)|\le C^\Delta$. 
\end{conjecture}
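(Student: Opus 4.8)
The plan is to separate the \emph{qualitative} finiteness statement from the \emph{sharp} exponential rate, since only the latter is genuinely hard. By Lemma~\ref{lem:adj-pair} it suffices to assume $\kappa_{LLY}(x,y)>0$ on edges, and by Lemma~\ref{lem:diam} any positive edge curvature already forces finite diameter. The first step is a \emph{curvature gap}: via the limit-free formula of Theorem~\ref{thm:curvature_laplacian}, $\kappa_{LLY}(x,y)$ for an edge $xy$ is the optimum of a linear program whose data are integer combinations of $1/d_x$ and $1/d_y$, so its positive values cannot be arbitrarily small once $d_x,d_y\le\Delta$; this gives a uniform lower bound $\kappa_{LLY}(x,y)\ge\delta(\Delta)>0$. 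Combined with Lemma~\ref{lem:diam} and the trivial ball-growth estimate, truncated at $j=\mathrm{diam}(G)\le 2/\delta(\Delta)$, this already shows $G$ is finite with a bound depending only on $\Delta$.

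The difficulty is that this route is quantitatively far too weak. The extremal hypercubes have $\Delta=d$, curvature $2/d$, and diameter $d$, so the gap satisfies $\delta(\Delta)\le 2/\Delta$; plugging $\kappa=2/\Delta$ even into the sharper Lin--Lu--Yau estimate $1+\sum_{j=1}^{2/\kappa}\Delta^j\prod_{i=1}^{j-1}(1-i\kappa/2)$ of \cite{LLY} telescopes to
\[
1+\sum_{j=1}^{\Delta}\frac{\Delta!}{(\Delta-j)!}\;\approx\;e\,\Delta!,
\]
which is $\Delta^{\Theta(\Delta)}$ rather than $C^{\Delta}$. Hence no argument that merely multiplies a per-layer branching factor of order $\Delta$ across $\Theta(\Delta)$ layers can succeed: the content of the conjecture is that positive curvature forces the \emph{average} branching per layer to be $O(1)$, exactly as in the binomial layers $\binom{d}{r}$ of $Q_d$. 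A genuinely new volume estimate is required.

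The core of the plan is therefore a discrete Bishop--Gromov comparison modeled on the hypercube. Fix a root $v$ and set $S_r=\{u:d(u,v)=r\}$. I would aim to prove, from positive curvature, a one-step sphere-growth inequality of the shape
\[
\frac{|S_{r+1}|}{|S_r|}\;\le\;\frac{\Delta-r}{r+1}\cdot\bigl(1+o(1)\bigr),
\]
so that the telescoping product reproduces the binomial profile and $\sum_r|S_r|\le C^{\Delta}$. The mechanism: $\kappa_{LLY}(x,y)>0$ on an edge forces a good coupling of $m_x^\alpha,m_y^\alpha$, i.e.\ an almost-perfect matching between $N(x)$ and $N(y)$ realized by transports of length $\le 2$; iterating this matching outward from $v$ should show that most neighbors of a vertex in $S_{r+1}$ point back toward $S_r$, capping forward branching. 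The cleanest rigorous framework would be to show that LLY positivity with maximum degree $\le\Delta$ implies a curvature--dimension inequality $CD(\kappa,N)$ with effective dimension $N=O(\Delta)$, and then invoke a Bishop--Gromov-type volume bound to conclude $|V(G)|\le C^{O(\Delta)}$.

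The main obstacle is precisely this sphere-growth (equivalently $CD(\kappa,O(\Delta))$) estimate: LLY curvature is an edge-local, one-step quantity, whereas the $C^{\Delta}$ rate is a multi-scale statement, and bridging them is exactly what the diameter-times-degree counting cannot do. A second, independent difficulty is triangles and short odd cycles: the paper's other results assume $\{C_3,C_5\}$-freeness precisely because cliques such as $K_n$ inflate curvature while the local geometry ceases to resemble the hypercube, so the matching argument must be made robust to dense local structure---e.g.\ by absorbing each clique into the degree count and showing that extra triangles can only increase curvature and hence only tighten the growth bound. I expect the $C_3$-free case (the first conjecture, with its clean target $2^{2/\kappa}$) to fall out of the same machinery first, with the general $C^{\Delta}$ statement following once the growth inequality is stabilized against local density.
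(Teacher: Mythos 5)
This statement is one of the paper's open conjectures: the paper contains no proof of it, so your proposal cannot be checked against one and has to stand on its own as a proof --- and it does not. The part that is essentially sound is the qualitative finiteness step: since the infimum in Theorem~\ref{thm:curvature_laplacian} is attained by an integer-valued $1$-Lipschitz function (the constraint polytope has integral vertices; cf.\ also \cite{BCLMP18}), the value $\nabla_{xy}\Delta f$ is an integer combination of $1/d_x$ and $1/d_y$, so positivity of the curvature on an edge with $d_x,d_y\le\Delta$ forces $\kappa_{LLY}(x,y)\ge \delta(\Delta)$ with $\delta(\Delta)$ on the order of $1/\Delta^2$; with Lemma~\ref{lem:diam} this gives $\mathrm{diam}(G)=O(\Delta^2)$ and $|V(G)|\le \Delta^{O(\Delta^2)}$. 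Your computation that even the Lin--Lu--Yau bound at $\kappa=2/\Delta$ only yields roughly $e\,\Delta!$ is also correct. But everything past that point is declared intent, not argument: the sphere-growth inequality and the $CD(\kappa,O(\Delta))$ reduction are stated as aims (``I would aim to prove''), and you yourself identify them as the main obstacle. A proposal whose central estimate is an unproved target leaves the conjecture exactly as open as the paper leaves it.

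Beyond incompleteness, two specific steps in the plan would fail as described. First, the matching mechanism you invoke is the paper's Lemma~\ref{lem:matching}, whose proof uses $d$-regularity and $\{C_3,C_5\}$-freeness essentially: with triangles present, the optimal coupling of $m_x^\alpha$ and $m_y^\alpha$ can transport mass along distance-$1$ moves inside triangles and absorb it into laziness, and no near-perfect matching between $N(x)$ and $N(y)$ follows from positive curvature --- $K_n$ is exactly the case where curvature is large yet the local geometry is nothing like a matching between spheres. Your proposed patch, that ``extra triangles can only increase curvature and hence only tighten the growth bound,'' is a non sequitur: there is no comparison principle here under which larger curvature at fixed $\Delta$ implies slower volume growth, and $K_n$ satisfies $|V|\le C^\Delta$ only for the trivial reason $|V|=\Delta+1$, not because curvature caps branching; absorbing cliques into the degree count is the entire difficulty, not a remark. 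Second, the inequality $|S_{r+1}|/|S_r|\le \frac{\Delta-r}{r+1}(1+o(1))$ secretly asserts $\mathrm{diam}(G)\le \Delta(1+o(1))$, since its right-hand side vanishes at $r=\Delta$; but the curvature gap only gives $\mathrm{diam}(G)\le 2/\delta(\Delta)=O(\Delta^2)$, and as stated your inequality is vacuously false for any nonempty sphere with $r\ge\Delta$ unless the diameter bound is proved first. Establishing $\mathrm{diam}(G)=O(\Delta)$ from positive curvature at bounded degree is itself open and is, in essence, a large fraction of the conjecture. So the framing (finiteness is cheap; the binomial layer profile is the real content) is a reasonable reading of the problem, but the proposal supplies no new leverage on the hard step.
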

We further conjecture that $C$ can be chosen to be $\sqrt{5}$. This is best possible evidenced by $C_5^n$, the Cartesian product of $n$ $C_5$'s.


This paper is organized as follows. In Section~\ref{sec:preliminaries}, we show some preliminary lemmas. In Section~\ref{sec:pf_main1}, we give a proof of Theorem~\ref{thm:main1}.

\section{Preliminaries}\label{sec:preliminaries}
Fix a graph $G$.
For any two vertices $x$ and $y$, 
we partition $N(y)$ into three parts $\Gamma_x^-(y)\cup \Gamma_x^0(y) \cup \Gamma_x^+(y)$ as follows:
\begin{align}
  \Gamma_x^-(y)&=\{u\in N(y)\colon d(x,u)=d(x,y)-1\},\\
   \Gamma_x^0(y)&=\{u\in N(y)\colon d(x,u)=d(x,y)\},\\
    \Gamma_x^+(y)&=\{u\in N(y)\colon d(x,u)=d(x,y)+1\}.
\end{align}

\begin{lemma} \label{lem:C3C5free}
    Suppose that a simple graph $G$ contains no $C_3$ or $C_5$ as a subgraph. If  $\kappa_{LLY}(u,v)\ge \kappa >0$ for every edge $uv$ in $G$. Then
\begin{equation} \label{eq:C3C5i}
      |\Gamma^+_x(y)|+ |\Gamma^0_x(y)| \le \left(1-\frac{i\kappa}{2}\right)d_y
\end{equation}
       for any vertex $x$ and any vertex $y$ such that $d(x,y)=i$ $(1\le i \le diam(G)\le \lfloor 2/ \kappa\rfloor)$.
\end{lemma}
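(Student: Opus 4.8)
The plan is to prove the inequality in its equivalent, more transparent form. Since $N(y)$ is partitioned as $\Gamma_x^-(y)\cup\Gamma_x^0(y)\cup\Gamma_x^+(y)$, we have $|\Gamma_x^-(y)|+|\Gamma_x^0(y)|+|\Gamma_x^+(y)| = d_y$, so the asserted bound $|\Gamma_x^+(y)|+|\Gamma_x^0(y)|\le(1-\tfrac{i\kappa}{2})d_y$ is exactly the lower bound
\[
|\Gamma_x^-(y)|\ \ge\ \frac{i\kappa}{2}\,d_y .
\]
I would prove this form and proceed by induction on $i=d(x,y)$; Lemma~\ref{lem:diam} guarantees $i\le\lfloor 2/\kappa\rfloor$, so the induction is finite.

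The engine is Theorem~\ref{thm:curvature_laplacian}. Applying it to the pair $x,y$ (whose curvature is at least $\kappa$ by Lemma~\ref{lem:adj-pair}) with the $1$-Lipschitz test function $f=d(x,\cdot)$, which satisfies $\nabla_{yx}f=1$, gives $\Delta f(x)=1$ and $\Delta f(y)=(|\Gamma_x^+(y)|-|\Gamma_x^-(y)|)/d_y$, hence $\kappa\le\nabla_{xy}\Delta f=(2|\Gamma_x^-(y)|+|\Gamma_x^0(y)|)/(i\,d_y)$. This yields the ``first-order'' inequality $2|\Gamma_x^-(y)|+|\Gamma_x^0(y)|\ge i\kappa d_y$. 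For the base case $i=1$ it already suffices: $\Gamma_x^-(y)=\{x\}$, and since $G$ is $C_3$-free, $x$ and $y$ have no common neighbour, so $\Gamma_x^0(y)=\emptyset$ and the inequality becomes $1=|\Gamma_x^-(y)|\ge\tfrac{\kappa}{2}d_y$.

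The entire difficulty is the same-level term $\Gamma_x^0(y)$: for $i\ge 2$ the first-order bound falls short of the target by exactly $|\Gamma_x^0(y)|/2$, and a brief analysis shows no $1$-Lipschitz test function on the pair $(x,y)$ can do better, since every vertex of $\Gamma_x^0(y)$ sits at distance $i$ from $x$ and is therefore capped by $d(x,\cdot)$ at the value $f(y)$. Both forbidden subgraphs must enter here. First, $C_3$-freeness shows that for $u\in\Gamma_x^0(y)$ the backward sets $\Gamma_x^-(y)$ and $\Gamma_x^-(u)$ are disjoint, since a common backward neighbour $w$ would close a triangle $y\,u\,w$. The plan is then to sharpen the test function on an edge rather than on the far pair: fix $y^-\in\Gamma_x^-(y)$ and, using Theorem~\ref{thm:curvature_laplacian} on the edge $y^-y$, build a $1$-Lipschitz $f$ with $f(y)-f(y^-)=1$ that pushes every neighbour of $y$ other than $y^-$ up to $f(y)+1$ — legitimate precisely because $C_3$-freeness makes all of them non-adjacent to $y^-$ — so that the $\Gamma_x^0$ neighbours are counted together with the forward neighbours. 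Extending $f$ consistently beyond $N[y]\cup N[y^-]$ is where $C_5$-freeness is indispensable: a vertex $w\in N(y^-)$ at distance $2$ from a pushed-up neighbour $u\in N(y)$, via $w\,z\,u$, would close the five-cycle $y^-\,w\,z\,u\,y$, so $C_5$-freeness forbids exactly the length-two shortcuts that would otherwise break the Lipschitz condition.

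The step I expect to be the main obstacle is the interaction that $C_5$-freeness does \emph{not} rule out, namely $C_4$'s: adjacencies between $N(y)\setminus\{y^-\}$ and $N(y^-)\setminus\{y\}$ remain possible and, as already the hypercube shows, they obstruct the naive extension. Controlling these $C_4$-interactions — equivalently, accounting for the same-level edges counted by $\Gamma_x^0$ — is the heart of the argument and is what the induction on $i$ must be organised around. Concretely, I expect to feed the level-$(i-1)$ instance of the lemma back in to control the backward neighbours of the $\Gamma_x^0(y)$ vertices: their disjointness (from $C_3$) together with their mutual distance-two separation (from $C_5$) should bound their number against $|\Gamma_x^-(y)|$ and the degrees at level $i-1$. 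Making this accounting exact — so that the $|\Gamma_x^0(y)|/2$ deficit is \emph{recovered} rather than merely bounded — is the most delicate point, and is precisely what forces the simultaneous use of both the $C_3$- and $C_5$-free hypotheses.
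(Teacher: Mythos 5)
Your setup is sound and overlaps with the paper's proof in its opening moves: the reformulation as a lower bound on $|\Gamma_x^-(y)|$, the induction on $i$, the base case via triangle-freeness, and even the key structural facts (from $C_3$-freeness, $d(\Gamma_x^-(y),\Gamma_x^0(y))\ge 2$; from $C_5$-freeness, a distance-$3$ separation between the backward neighbours of $y^-$ and the same-level neighbours of $y$) are exactly the ones the paper uses. You also correctly sense that the curvature must be tested on the edge $y^-y$ rather than on the pair $(x,y)$. But the proposal stops precisely where the proof begins: you never write down a valid test function, never derive the inequality that closes the induction, and explicitly defer ``making this accounting exact'' as the open delicate point. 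That deferred point \emph{is} the lemma.

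Moreover, the two concrete directions you sketch for completing it would not work. First, the function you describe --- lifting \emph{every} neighbour of $y$ other than $y^-$ to $f(y)+1$ --- is both too strong and not $1$-Lipschitz: a vertex $w\in\Gamma_x^-(y)\setminus\{y^-\}$ lifted to $f(y)+1$ may be adjacent to a vertex of $\Gamma_x^-(y^-)$, which must sit at $f(y^-)-1$ for $\Delta f(y^-)$ to be useful, creating a gap of $3$ across one edge; and even where such a lift is legal it only proves $\kappa\le 2/d_y$, i.e.\ the $i=1$ statement, not the level-$i$ bound. The correct function (writing $u=y^-$) keeps all of $\Gamma_x^-(y)$ at $0$, and sets $f=-1$ on $\Gamma_x^-(u)$, $f=1$ on $\Gamma_x^0(u)\cup\Gamma_x^+(u)$, $f=2$ on $\Gamma_x^0(y)\cup\Gamma_x^+(y)$. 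Second, your expected ``main obstacle'' --- the $C_4$'s between $N(y)$ and $N(y^-)$, to be handled by counting backward neighbours of $\Gamma_x^0(y)$ vertices against $|\Gamma_x^-(y)|$ --- is a red herring: with the function above, any adjacency between $N(u)$ and $N(y)$ at the lifted levels joins values differing by at most $1$, so $C_4$'s cost nothing and no counting occurs. The deficit $|\Gamma_x^0(y)|/2$ is not recovered by any counting argument at level $i$; it is absorbed by running the induction in \emph{ratio} form. Applying Theorem~\ref{thm:curvature_laplacian} to the edge $uy$ with the function above gives
\[
\frac{|\Gamma_x^+(y)|+|\Gamma_x^0(y)|}{d_y}\;\le\;\frac{|\Gamma_x^+(u)|+|\Gamma_x^0(u)|}{d_u}-\frac{\kappa}{2},
\]
and the inductive hypothesis applied to $u\in\Gamma_x^-(y)$ (at distance $i-1$ from $x$) bounds the right side by $1-\tfrac{i\kappa}{2}$. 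That single inequality, which your proposal never reaches, is the whole inductive step.
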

{\bf Remark:} In \cite{LLY}, it was proved that if
$\kappa_{LLY}(u,v)\ge \kappa >0$ for every edge $uv$ in $G$, then
\begin{equation} \label{eq:i}
  |\Gamma^+_x(y)|+ \frac{1}{2} |\Gamma^0_x(y)| \le \left(1-\frac{i\kappa}{2}\right)d_y  
\end{equation}
holds for any vertex $x$ and any vertex $y$ such that $d(x,y)=i$ $(1\le i \le diam (G) \le 2/ \kappa)$.
Inequality \eqref{eq:C3C5i} is a key improvement to the inequality \eqref{eq:i} given that $G$ is $\{C_3,C_5\}$-free.

\begin{proof}
Use induction on $i$. Consider the base case $i=1$. We have $\Gamma^0_x(y)=\emptyset$
and $\Gamma^+_x(y)=N(y)\setminus \{x\}$ since $G$ is triangle free. Equation
\ref{eq:C3C5i} at $i=1$ is equivalent to
\begin{equation} \label{eq:C3db}
\kappa\leq \frac{2}{d_y}.
\end{equation}
Consider a function $f\colon N(x)\cup N(y) \to \{0,1, 2\}$ given by
$$f(z)=
\begin{cases}
    0 & \mbox{ if } z=x;\\
    1 & \mbox{ if } z\in N(x);\\
    2 &  \mbox{ if } z \in \Gamma^+_x(y). 
\end{cases}
$$
Observe that $f$ is $1$-Lipschitz and $f(y)-f(x)=1$. Applying Theorem \ref{thm:curvature_laplacian}, we have
\begin{align*}
\kappa_{LLY}(x,y) &\leq \nabla_{xy} \Delta f\\
&=\Delta f (x) - \Delta f(y)\\
&= 1- \frac{|\Gamma^+_x(y)|-1}{d_y}\\
&=1- \frac{d_y-2}{d_y}\\
&=\frac{2}{d_y}.
\end{align*}
Since $\kappa_{LLY}(x,y)\geq \kappa$, Equation \ref{eq:C3db} (thus the base case) is proved.

Now we assume the hypothesis holds for $i-1$. Consider the case $d(x,y)=i$.
Let $u\in \Gamma^-_x(y)$. 
Applying the inductive hypothesis to $u$, we have
\begin{equation} \label{eq:C3C5u}
      |\Gamma^+_x(u)|+ |\Gamma^0_x(u)| \le \left(1-\frac{(i-1)\kappa}{2}\right)d_u.
\end{equation}
Now consider a function $f\colon N(u)\cup N(y) \to \{-1,0,1, 2\}$ given by
$$f(z)=
\begin{cases}
    -1& \mbox{ if } z\in \Gamma_x^-(u); \\ 
    0 & \mbox{ if } z\in \Gamma_x^-(y);\\
    1 & \mbox{ if } z\in \Gamma_x^0(u)\cup \Gamma_x^+(u);\\
    2 &  \mbox{ if } z \in \Gamma_x^0(y)\cup \Gamma^+_x(y). 
\end{cases}
$$
Since $u\in \Gamma_x^-(y)$ and $y\in \Gamma_x^+(u)$, we have $f(u)=0$ and $f(y)=1$. Thus, $\nabla_{yu}f=1$. Now we prove $f\in Lip(1)$. Observe that $f$ is obtained from
a natural Lipschitz function by shifting the vertices in $\Gamma_x^0(u)$ and $\Gamma_x^0(y)$ to the right. It is sufficient to show the distances from
vertices in $\Gamma_x^0(u)$ and $\Gamma_x^0(y)$ to others that satisfy the Lipschitz condition.
Since $G$ is $C_3$-free, we have
\begin{align*}
    d(\Gamma_x^-(u), \Gamma_x^0(u)) &\geq 2,\\
     d(\Gamma_x^-(y), \Gamma_x^0(y)) &\geq 2.\\
\end{align*}
Since $G$ is $C_5$-free, we have
\[d(\Gamma_x^-(u), \Gamma_x^0(y)) \geq 3.\]
Therefore, $f\in Lip(1)$. 
Applying Theorem \ref{thm:curvature_laplacian}, we have
\begin{align*}
\kappa_{LLY}(u,y) &\leq \nabla_{uy} \Delta f\\
&=\Delta f (u) - \Delta f(y)\\
&= \frac{|\Gamma_x^+(u)| + |\Gamma_x^0(u)| -|\Gamma_x^-(u)|}{d_u}- \frac{|\Gamma_x^+(y)| + |\Gamma_x^0(y)| -|\Gamma_x^-(y)|}{d_y}\\
&=\frac{2(|\Gamma_x^+(u)| + |\Gamma_x^0(u)|)}{d_u}- \frac{2(|\Gamma_x^+(y)| + |\Gamma_x^0(y)|)}{d_y}.
\end{align*}
Since $\kappa \leq \kappa_{LLY}(u,y)$, we have
\begin{align*}
   \frac{|\Gamma_x^+(y)| + |\Gamma_x^0(y)|}{d_y}
   &\leq \frac{|\Gamma_x^+(u)| + |\Gamma_x^0(u)|}{d_u} -\frac{\kappa}{2}\\
   &\leq  \left(1-\frac{(i-1)\kappa}{2}\right) -\frac{\kappa}{2} \hspace*{2cm} \mbox{by Equation \eqref{eq:C3C5u}}\\
   &= 1- \frac{i \kappa}{2}.
\end{align*}
The inductive proof is finished.
\end{proof}

We need the following lemma to take care of the error term when $2/\kappa$ is not an integer.

\begin{lemma} \label{lem:noninteger}
    For any non-integer real number $s\geq 1$, we have
    \begin{equation}
        \sum_{i=0}^{\lfloor s\rfloor +1} \binom{s}{i}< 2^s + \frac{1}{4(s+1)}.
    \end{equation}
\end{lemma}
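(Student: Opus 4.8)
The plan is to read $\binom{s}{i}$ as the generalized binomial coefficient $\binom{s}{i}=\frac{1}{i!}\prod_{j=0}^{i-1}(s-j)$ and to compare the truncated sum against the full binomial series. Writing $n=\lfloor s\rfloor\ge 1$ and $t=s-n\in(0,1)$, I would first record the exact evaluation $\sum_{i=0}^{\infty}\binom{s}{i}=2^{s}$: for $s>0$ one checks $|\binom{s}{i}|$ decays like $i^{-1-s}$, so the series converges absolutely, and Abel's theorem identifies its value with $\lim_{x\to 1^-}(1+x)^{s}=2^{s}$. Next I would pin down the sign pattern. Since $s-j\ge t>0$ for $0\le j\le n$ and $s-j\le t-1<0$ for $j\ge n+1$, the coefficient $\binom{s}{i}$ is strictly positive for $0\le i\le n+1$ and carries sign $(-1)^{\,i-n-1}$ for $i\ge n+1$. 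In particular every term retained by the lemma is positive, while the discarded tail $\sum_{i\ge n+2}\binom{s}{i}$ is an alternating series whose leading term is negative.

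The heart of the argument is then an alternating-series estimate. Setting $a_i=|\binom{s}{i}|$, the ratio $a_{i+1}/a_i=(i-s)/(i+1)<1$ for every $i\ge n+1$, so the magnitudes $a_i$ strictly decrease and tend to $0$. Hence the tail $T=\sum_{i\ge n+2}\binom{s}{i}=-a_{n+2}+a_{n+3}-a_{n+4}+\cdots$ satisfies $-a_{n+2}<T<0$. Rearranging $\sum_{i=0}^{n+1}\binom{s}{i}=2^{s}-T$ gives $\sum_{i=0}^{n+1}\binom{s}{i}<2^{s}+a_{n+2}$, so the entire lemma collapses to the single inequality $a_{n+2}=|\binom{s}{n+2}|\le \frac{1}{4(s+1)}$.

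For this last estimate I would compute $a_{n+2}=\frac{(1-t)\prod_{j=0}^{n}(j+t)}{(n+2)!}$. Absorbing the factor $(n+1+t)=s+1$ into the product and writing $(n+2)!=\prod_{j=0}^{n+1}(j+1)$, the target $a_{n+2}\le\frac{1}{4(n+1+t)}$ becomes exactly $4(1-t)\prod_{j=0}^{n+1}\frac{j+t}{j+1}\le 1$. Crucially, each factor $\frac{j+t}{j+1}<1$, so this product is decreasing in its length; over all admissible $n\ge 1$ it is therefore largest at $n=1$, and it suffices to verify the one‑variable polynomial inequality $\tfrac{2}{3}\,t(1-t)(1+t)(2+t)\le 1$ on $[0,1]$, whose maximum is about $0.67$. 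Monotonicity in $n$ then disposes of all larger $s$ at once. I expect the main obstacle to be precisely this coefficient bound: the naive route of estimating $\binom{s}{n+2}$ through Stirling/Gamma asymptotics is messy and makes the clean constant $\tfrac14$ hard to obtain honestly, whereas the two structural observations — rewriting the kept sum as $2^{s}$ minus an alternating tail, and reducing the binomial estimate to the base case $n=1$ by monotonicity of the product — turn the whole statement into an elementary degree‑four inequality.
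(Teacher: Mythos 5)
Your proposal is correct, but it reaches the lemma by a genuinely different route than the paper, so a comparison is worthwhile. The paper applies Taylor's theorem to $f(x)=(1+x)^s$ at $x=0$, truncated at order $n=\lfloor s\rfloor+1$ and evaluated at $x=1$: the Lagrange remainder $\frac{f^{(n+1)}(\xi)}{(n+1)!}$ is negative, and its magnitude is at most $\frac{s(s-1)\cdots(s-\lfloor s\rfloor)\,(\lfloor s\rfloor+1-s)}{(n+1)!}$, which is exactly the coefficient $\bigl|\binom{s}{\lfloor s\rfloor+2}\bigr|$ that your argument isolates. So both proofs ultimately reduce to the same key estimate $\bigl|\binom{s}{\lfloor s\rfloor+2}\bigr|<\frac{1}{4(s+1)}$, but they arrive at it and dispose of it differently. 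You arrive via the full binomial series: you need convergence of $\sum_{i\ge 0}\binom{s}{i}$ at the boundary point $x=1$ together with Abel's theorem to identify the value $2^s$, and then the alternating-series test to trap the tail strictly between $-\bigl|\binom{s}{\lfloor s\rfloor+2}\bigr|$ and $0$; this is valid (the sign pattern and the ratio computation $(i-s)/(i+1)<1$ are both right), but it carries analytic overhead that Taylor's theorem with remainder avoids entirely. For the coefficient bound itself, the comparison reverses: the paper's computation is the lighter one, since writing the bound as $\frac{1}{n+1}\cdot\frac{s}{n}\cdot\frac{s-1}{n-1}\cdots\frac{s-\lfloor s\rfloor+1}{2}\cdot(s-\lfloor s\rfloor)(\lfloor s\rfloor+1-s)$ makes every middle ratio less than $1$ and leaves $t(1-t)\le\frac14$, giving $\frac{1}{4(\lfloor s\rfloor+2)}<\frac{1}{4(s+1)}$ in one line, whereas your route multiplies through by $4(s+1)$, uses that each factor $\frac{j+t}{j+1}<1$ makes the product decreasing in $n$, and then checks the quartic $\frac23\,t(1-t)(1+t)(2+t)\le 1$ at the base case $n=1$. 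That quartic check does work, and in fact more is true than you claimed: its maximum on $[0,1]$ is exactly $\frac23$, attained at $t=\frac{\sqrt5-1}{2}$, so your method even yields the slightly stronger constant $\frac{2}{3}\cdot\frac{1}{4(s+1)}$ in place of $\frac{1}{4(s+1)}$ --- at the cost of a heavier elementary computation than the paper's direct use of $t(1-t)\le\frac14$.
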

{\bf Remark:} When $s$ is an integer, there is no the extra term $\frac{1}{4(s+1)}$. We have
$$\sum_{i=0}^{s +1} \binom{s}{i}=\sum_{i=0}^{s} \binom{s}{i}=
2^s.$$

\begin{proof}
Consider the $n$-th Taylor expansion of $f(x)=(1+x)^s$ at $x=0$. We have
    $$f(x)=\sum_{i=0}^{n} \binom{s}{i}x^i
    +\frac{f^{(n+1)}(\xi)}{(n+1)!} x^{n+1},$$
where $\xi\in [0,x]$.
Now we choose $x=1$ and 
$n=\lfloor s\rfloor$+1. 
We need to estimate the error term. 
Note that
\[ f^{(n+1)}(\xi) = s(s-1)\cdots (s-\lfloor s\rfloor)
(s-\lfloor s\rfloor-1) (1+\xi)^{(s-\lfloor s\rfloor-2)}<0.\]
So the error term is negative. We have 
\begin{align*}
\left| \frac{f^{(n+1)}(\xi)}{(n+1)!} x^{n+1} \right|
&\leq \frac{s(s-1)\cdots (s-\lfloor s\rfloor)
(\lfloor s\rfloor+1-s)}{(n+1)!}\\
&=\frac{1}{n+1}\cdot \frac{s}{n} \cdot \frac{s-1}{n-1}\cdots
\frac{s-\lfloor s\rfloor+1}{2} \cdot (s-\lfloor s\rfloor)
(\lfloor s\rfloor+1-s)\\
&< \frac{1}{n+1}\cdot 1 \cdots 1 \cdot \frac{1}{4}\\ 
&<\frac{1}{4(s+1)}.
\end{align*}

\end{proof}

 When the graph is regular, we further have the following bound for the value of the positive LLY Ricci curvature. It will be used to prove the equality case 
 in Theorem \ref{thm:main1}.
 It also has independent interest.
\begin{lemma}\label{lem:matching}
Let $d$ be an integer and $G$ be a $d$-regular graph with no $C_3$ or $C_5$ as a subgraph. If $e=uv$ has positive LLY Ricci curvature, then there is a perfect matching between $N(u)\backslash \{v\}$ and $N(v)\backslash \{u\}$ in $G$.
\end{lemma}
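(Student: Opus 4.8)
The plan is to prove the contrapositive: assuming the bipartite graph $H$ with parts $A := N(u)\setminus\{v\}$ and $B := N(v)\setminus\{u\}$ and edge set $E(G)\cap(A\times B)$ has no perfect matching, I will produce a single $1$-Lipschitz function that certifies $\kappa_{LLY}(u,v)\le 0$ through the Laplacian characterization of Theorem~\ref{thm:curvature_laplacian}, contradicting positivity. First note that since $G$ is $d$-regular and $C_3$-free, $u$ and $v$ share no common neighbor, so $A$ and $B$ are disjoint with $|A|=|B|=d-1$, and each is independent. The structural crux is the following distance dichotomy, which is where the $C_5$-free hypothesis enters: for non-adjacent $a\in A$ and $b\in B$ one has $d(a,b)=3$, realized by the path $a$--$u$--$v$--$b$. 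Indeed $a\not\sim b$ forces $d(a,b)\ge 2$, and a common neighbor $w\notin\{u,v\}$ of $a$ and $b$ would make $u$--$a$--$w$--$b$--$v$--$u$ a $5$-cycle, while $w\in\{u,v\}$ is impossible because $u\not\sim b$ and $v\not\sim a$ (again by $C_3$-freeness); hence $d(a,b)\ne 2$. I will also record the routine facts $d(u,v)=1$, $d(u,b)=d(a,v)=2$, and $d(a,a')=d(b,b')=2$ for distinct $a,a'\in A$ and $b,b'\in B$.

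Next, since $H$ has no perfect matching, Hall's theorem yields a set $S\subseteq A$ whose neighborhood $N_H(S)$ in $H$ satisfies $|N_H(S)|<|S|$, i.e. $|S|-|N_H(S)|\ge 1$. Using $S$ I define $f$ on $\{u,v\}\cup A\cup B$ by $f(u)=-1$, $f(v)=0$, $f(a)=-2$ for $a\in S$ and $f(a)=0$ for $a\in A\setminus S$, and $f(b)=-1$ for $b\in N_H(S)$ and $f(b)=1$ for $b\in B\setminus N_H(S)$; I then extend it to all of $V(G)$ by the McShane formula $f(x)=\max_w\big(f(w)-d(x,w)\big)$ so as to keep it $1$-Lipschitz. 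The only pairs whose Lipschitz condition is not immediate are $a\in S$ (value $-2$) against $b\in B\setminus N_H(S)$ (value $1$): such a pair is non-adjacent by the definition of $N_H(S)$, so the distance dichotomy gives $d(a,b)=3=|{-2}-1|$, exactly meeting the bound. By construction $\nabla_{vu}f=f(v)-f(u)=1$, so $f$ is admissible in Theorem~\ref{thm:curvature_laplacian}.

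Finally I will evaluate $\nabla_{uv}\Delta f=\Delta f(u)-\Delta f(v)$. Writing $N(u)=\{v\}\cup A$ and $N(v)=\{u\}\cup B$ gives the identity $d\big(\Delta f(u)-\Delta f(v)\big)=(d+1)\big(f(v)-f(u)\big)+\sum_{a\in A}f(a)-\sum_{b\in B}f(b)$. Substituting the values yields $\sum_{a}f(a)-\sum_{b}f(b)=-2|S|-\big((d-1)-2|N_H(S)|\big)$, hence $d\big(\Delta f(u)-\Delta f(v)\big)=2\big(1-(|S|-|N_H(S)|)\big)\le 0$ by the Hall deficiency. Theorem~\ref{thm:curvature_laplacian} then forces $\kappa_{LLY}(u,v)\le\nabla_{uv}\Delta f\le 0$, the desired contradiction, so $H$ must admit a perfect matching. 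I expect the main obstacle to be making the constants line up: the test function must simultaneously respect the Lipschitz cap on the adjacent $A$--$B$ pairs and still be extreme enough on the Hall-deficient set $S$ to push $\Delta f(u)-\Delta f(v)$ below zero. The distance-$3$ fact for non-adjacent pairs is precisely what frees the values on $S$ and $B\setminus N_H(S)$ to spread by $3$ and lets the computation close at exactly $0$ when the deficiency is $1$.
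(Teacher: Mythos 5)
Your proof is correct and is essentially the paper's own argument: the same contrapositive via Hall's theorem, the same test function (yours is the paper's shifted by $-1$), the same use of $C_3$- and $C_5$-freeness to force distance at least $3$ between $S$ and $B\setminus N_H(S)$, and the same Laplacian computation yielding $2\bigl(1-(|S|-|N_H(S)|)\bigr)/d\le 0$. The only cosmetic additions are the explicit McShane extension (which the paper leaves implicit) and packaging the computation as an identity.
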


\begin{proof} Since $G$ is triangle-free, $N(u)$ and $N(v)$ are disjoint for every edge $e=uv\in E(G)$ and $N(w)$ is independent for each $w\in V(G)$. Consider the bipartite subgraph $H$ of $G$ induced by edges in $G$ with one end in $X=N(u)\backslash \{v\}$, one end in $Y=N(v)\backslash \{u\}$. Note that $|X|=|Y|=d-1$ as $G$ is $d$-regular.
Suppose there is no perfect matching from $X$ to $Y$. By Hall's theorem, there exists $S\subseteq X$ such that $|N_H(S)|< |S|$, i.e., $|N_H(S)|\le |S|-1$. Let $f$ be a function such that  $f(x)=-1$ for every $x\in S$, $f(x)=0$ for every $x\in \{u\} \cup N_H(S)$, $f(x)=1$ for every $x \in \{v\} \cup X\backslash S$, and $f(x)=2$ for every $x\in Y \backslash N_H(S)$. We claim that $f\in Lip(1)$.

Let $v_1\in S$ and $v_2\in Y\backslash N_H(S)$. Note that there is no edge between $S$ and $Y\backslash N_H(S)$, i.e., $v_1v_2\notin E(G)$. Since $G$ contains no $C_3, C_5$, $G$ doesn't contain a $v_1v_2$-path of length two, otherwise a $v_1v_2$-path $P$ of length two together with $v_1u v v_2$ has a $C_3$ or $C_5$.  Therefore, for any $v_1\in S$ and any $v_2\in Y\backslash N_H(S)$, $d_G(v_1, v_2) \ge 3$. Observe that there is no edge between $S$ and $\{v\} \cup X\backslash S$ or between $\{u\} \cup N_H(S)$ and $Y\backslash N_H(S)$. Therefore, $f \in Lip(1)$. Note that $\nabla_{vu} f=1$, $\Delta f(u)= (-|S|+1+|X|-|S|)/d$ and $\Delta f(v)= (-1-|N_H(S)|+|Y|-|N_H(S)|)/d$. By Theorem~\ref{thm:curvature_laplacian},
\begin{align*}
\kappa_{LLY}(u,v) &\le \Delta f(u)- \Delta f(v) \\
&= (-|S|+1+|X| -|S|)/d -(-1-|N_H(S)|+|Y|-|N_H(S)|) / d \\
& = (d-2|S|)/d - (d-2-2|N_H(S)|)/d \\
& = 2(1+|N_H(S)|- |S|)/d \le 0,
\end{align*}
which is a contradiction.
\end{proof}

\begin{proposition}\label{prop:regular_bipartite_cur}
Let $d$ be an integer and $G$ be a $d$-regular graph with no $C_3$ or $C_5$ as a subgraph. If every edge in $G$ has positive LLY Ricci curvature, then $G$ has constant LLY Ricci curvature $2/d$ on every edge.
\end{proposition}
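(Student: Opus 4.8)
The plan is to establish the two inequalities $\kappa_{LLY}(u,v)\le 2/d$ and $\kappa_{LLY}(u,v)\ge 2/d$ for every edge $uv$, which together force $\kappa_{LLY}(u,v)=2/d$. The upper bound requires no new work: it is exactly the base case ($i=1$) computation inside the proof of Lemma~\ref{lem:C3C5free}. Since $G$ is triangle-free, for the edge $uv$ we have $\Gamma^0_u(v)=\emptyset$ and $\Gamma^+_u(v)=N(v)\setminus\{u\}$, and the test function with value $0$ at $u$, value $1$ on $N(u)$, and value $2$ on $\Gamma^+_u(v)$ lies in $Lip(1)$ with $\nabla_{vu}f=1$. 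Feeding it into Theorem~\ref{thm:curvature_laplacian} gives $\kappa_{LLY}(u,v)\le \nabla_{uv}\Delta f = 2/d_v = 2/d$, where the last equality uses $d$-regularity. This direction does not even use positivity of the curvature.

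The substance is the lower bound, and here I would use the perfect matching. Since every edge of $G$ has positive LLY curvature, Lemma~\ref{lem:matching} supplies a perfect matching $M$ between $X:=N(u)\setminus\{v\}$ and $Y:=N(v)\setminus\{u\}$, both of size $d-1$; triangle-freeness makes $X$ and $Y$ disjoint, so $M$ is a genuine bijection realized by edges of $G$. To bound the infimum in Theorem~\ref{thm:curvature_laplacian} from below, I must show $\nabla_{uv}\Delta f=\Delta f(u)-\Delta f(v)\ge 2/d$ for \emph{every} $f\in Lip(1)$ with $f(v)-f(u)=1$. Expanding the Laplacians over $N(u)=\{v\}\cup X$ and $N(v)=\{u\}\cup Y$, the edge $uv$ contributes a term $2(f(v)-f(u))=2$, and the remainder equals $\sum_{(x,y)\in M}[(f(x)-f(u))-(f(y)-f(v))]$. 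Rewriting each summand as $(f(x)-f(y))+(f(v)-f(u))=(f(x)-f(y))+1$ and using that $xy\in E(G)$ gives $f(x)-f(y)\ge -1$, every summand is nonnegative. Hence $d\,(\Delta f(u)-\Delta f(v))\ge 2$, that is $\kappa_{LLY}(u,v)\ge 2/d$, and combining with the upper bound yields equality on every edge.

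The only genuinely delicate point is the bookkeeping in the lower bound: one must pair up \emph{every} vertex of $N(u)\setminus\{v\}$ with an honest neighbor in $N(v)\setminus\{u\}$ so that the Lipschitz estimate $f(x)-f(y)\ge -1$ applies edge-by-edge, and one must make sure the two special neighbors $v$ and $u$ are counted exactly once so that they produce precisely the term $2$. Triangle-freeness guarantees $N(u)\cap N(v)=\emptyset$, which is what makes the matching a bijection between disjoint sets; the $C_5$-free hypothesis is not needed again here, as it has already been spent inside Lemma~\ref{lem:matching}. Beyond this, the argument is a direct expansion of the definition of $\Delta$, so no hard estimate remains once the matching is in hand.
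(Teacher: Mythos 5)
Your proposal is correct, and while your upper bound matches the paper's in spirit (both plug an explicit $\{0,1,2\}$-valued Lipschitz test function into Theorem~\ref{thm:curvature_laplacian}; the paper uses the even simpler function that is $0$ on $N[x]\setminus\{y\}$ and $1$ on $N[y]\setminus\{x\}$), your lower bound takes a genuinely different route. The paper proves $\kappa_{LLY}(x,y)\ge 2/d$ probabilistically: it uses the matching from Lemma~\ref{lem:matching} to build an explicit coupling of the $\alpha$-lazy random walks $m_x^\alpha$ and $m_y^\alpha$ (transport mass $(1-\alpha)/d$ along each of the $d-1$ matching edges and the surplus from $x$ to $y$), obtaining $W(m_x^\alpha,m_y^\alpha)\le \alpha+(1-\alpha)(d-2)/d$, hence $\kappa_\alpha(x,y)\ge (2/d)(1-\alpha)$, and then passes to the limit $\alpha\to 1$. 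You instead stay entirely inside the limit-free Laplacian formulation: for \emph{every} $f\in Lip(1)$ with $f(v)-f(u)=1$ you expand
\begin{equation*}
d\left(\Delta f(u)-\Delta f(v)\right) \;=\; 2\bigl(f(v)-f(u)\bigr)\;+\;\sum_{(x,y)\in M}\Bigl[\bigl(f(x)-f(y)\bigr)+\bigl(f(v)-f(u)\bigr)\Bigr],
\end{equation*}
and each matched-pair term is nonnegative because $xy\in E(G)$ forces $f(x)-f(y)\ge -1$; this bounds the infimum in Theorem~\ref{thm:curvature_laplacian} below by $2/d$ directly. Both arguments consume the matching lemma in exactly the same way, but yours buys uniformity (both directions run through the same theorem, with no coupling, no Wasserstein estimate, and no limit in $\alpha$), and as a side benefit it avoids the small sign slip in the paper's displayed bound for $W(m_x^\alpha,m_y^\alpha)$ (which should read $\alpha+(1-\alpha)(d-2)/d$); the paper's coupling argument, on the other hand, is the more standard probabilistic technique and makes the transport picture behind the bound explicit. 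One point worth stating explicitly in a final write-up: your pairwise estimate uses that the matching edges are edges of $G$ between the disjoint sets $N(u)\setminus\{v\}$ and $N(v)\setminus\{u\}$, which is exactly what Lemma~\ref{lem:matching} together with triangle-freeness provides.
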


\begin{proof}
Consider $\kappa_{LLY} (x,y)$ for every edge $xy$ in $G$. If follows from Lemma~\ref{lem:matching} that there is a perfect matching from $N(x)\backslash \{y\}$ and $N(y)\backslash \{x\}$. Let $f(v)=0$ for every $v\in N[x]\backslash \{y\} $ and $f(u)=1$ for every $u \in N[y]\backslash\{x\}$. Hence $\Delta f(x)=1/d$ and $\Delta f(y)=-1/d$. Since $f \in Lip(1)$ and $\nabla_{yx}f = 1$, $$\kappa_{LLY} (x,y) \le 1/d - (-1/d)=2/d.$$ Observe that $$W(m_x^\alpha, m_y^\alpha )\le (1-\alpha)/d \cdot (d-1) + (\alpha - (1-\alpha)/d)=\alpha - (1-\alpha)(d-2)/d$$ as there is a perfect matching from $N(x)\backslash \{y\}$ and $N(y)\backslash \{x\}$. Therefore $\kappa_\alpha (x,y) =1- W(m_x^\alpha, m_y^\alpha )\ge (2/d)(1-\alpha)$ and hence $\kappa_{LLY}(x,y) \ge 2/d$.
\end{proof}


\section{Proof of Theorem~\ref{thm:main1}}\label{sec:pf_main1}
\begin{proof}[Proof of Theorem~\ref{thm:main1}]
We can assume $\kappa=\min_{xy\in E(G)}\kappa_{LLY}(x,y)$.
Let $x$ be a vertex with the minimum degree $\delta$ in $G$. Let $N_i(x)=\{y : d(x,y)=i\}$ for each $i$ with $0\le i \le diam (G) \le \lfloor 2/\kappa \rfloor$. Let $y$ be a vertex in $N_i(x)$. Applying Lemma \ref{lem:C3C5free}, we have
\[
|\Gamma_x^+ (y) | + |\Gamma_x^0 (y) \ |\ \le (1-\frac{i\kappa}{2})d_y\]
for any $y\in N_i(x)$.
For each $0\le i \le \lfloor 2/\kappa \rfloor$, let $E_{i, i+1}=\{ uv \in E(G) \ |\ u\in N_i(x), v\in N_{i+1}(x)\}$
and $E_{i, i}=\{ uv \in E(G) | u,v\in N_i(x)\}$. 
Therefore, $$|E_{i, i+1}| + 2|E_{i,i}|=\sum_{y\in N_i(x)} (|\Gamma_x^+ (y)| +  |\Gamma_x^0 (y)|)
\le\sum_{y\in N_i(x)} (1-\frac{i\kappa}{2})d_y=(1-\frac{i\kappa}{2}) \sum_{y\in N_i(x)} d_y$$ for each $1\le i \le 2/\kappa$.
Note that $\sum_{y\in N_i(x)} d_y = |E_{i-1,i}|+ 2|E_{i,i}|+ |E_{i,i+1}|$.
Hence
\begin{align}
    |E_{i, i+1}| + 2|E_{i,i}| &\le (1-\frac{i\kappa}{2}) (|E_{i-1,i}|+ 2|E_{i,i}|+|E_{i,i+1}|) \\
   \frac{i\kappa}{2} (|E_{i, i+1}|  + 2|E_{i,i}|) &\le (1-\frac{i\kappa}{2}) |E_{i-1,i}|\\
    |E_{i, i+1}|  + 2|E_{i,i}| &\le \frac{\frac{2}{\kappa}-i}{i} |E_{i-1,i}|.
\end{align}

Therefore, we have
$$  \left(|E_{i, i+1}|  + 2|E_{i,i}|\right) \le \frac{\frac{2}{\kappa}-i}{i} \left(|E_{i-1,i}| +2|E_{i-1,i-1}|\right).$$
It follows that 
$$|E_{i, i+1}|+ 2|E_{i,i}|\le  \prod_{j=1}^{i}  \frac{\frac{2}{\kappa}-j}{j} (|E_{0,1}|+2|E_{0,0}|)=\binom{\frac{2}{\kappa}-1}{i} \delta $$ and hence $|E_{i, i+1}|+2|E_{i,i}|\le \binom{\frac{2}{\kappa}-1}{i} \delta$ for any $0\le i \le \lfloor 2/\kappa\rfloor$. Thus
\begin{align*}
    |E(G)|&= \sum_{i=0}^{\lfloor\frac{2}{\kappa}\rfloor} (|E_{i,i+1}|+|E_{i,i}|) \\
    &\leq \sum_{i=0}^{\lfloor\frac{2}{\kappa}\rfloor} (|E_{i,i+1}|+2|E_{i,i}|) \\
    &\le \sum_{i=0}^{\lfloor\frac{2}{\kappa}\rfloor }\binom{\frac{2}{\kappa}-1}{i} \delta.
\end{align*}

We first consider the case that $G$ is an $d$-regular graph. By Proposition \ref{prop:regular_bipartite_cur}, we have $\kappa=\frac{2}{d}$.
Thus, $\frac{2}{\kappa}$ is an integer. We have
$$\sum_{i=0}^{\lfloor\frac{2}{\kappa}\rfloor }\binom{\frac{2}{\kappa}-1}{i}= 2^{\frac{2}{\kappa}-1}.$$
We have
$$\delta |V(G)|/2= |E(G)| \leq 2^{\frac{2}{\kappa}-1}\delta.$$
This implies $|V(G)|\le 2 ^{\frac{2}{\kappa}}$ and we are done.

Now we assume $G$ is not regular, containing at least one vertex with degree at least $\delta+1$. We have
\begin{equation}
    2|E(G)|=\sum_{x\in V} d_x\geq \delta |V(G)| +1.
\end{equation}
On the other hand, we can apply Lemma \ref{lem:noninteger} with
$s=\frac{2}{\kappa}-1$. We get
\begin{align*}
    |E(G)| &\leq  \sum_{i=0}^{\lfloor\frac{2}{\kappa}\rfloor }\binom{\frac{2}{\kappa}-1}{i} \delta \\
    &\leq \delta\left(2^{\frac{2}{\kappa}-1} + \frac{1}{4(s+1)}\right)\\
    &=\delta\left(2^{\frac{2}{\kappa}-1} + \frac{\kappa}{8}\right).
\end{align*}
We have
$$\delta |V(G)|+1\le 2|E(G)|\leq 2\left(2^{\frac{2}{\kappa}-1}+\frac{\kappa}{8}\right)\delta.$$
Therefore, 
$$|V(G)| \leq 2^{\frac{2}{\kappa}} + 
\frac{\kappa}{4}-\frac{1}{\delta}< 2^{\frac{2}{\kappa}}. $$

In the last step, we apply the fact that $\kappa\leq \frac{2}{d_y}\leq \frac{2}{\delta}$ (see \eqref{eq:C3db}).

Now we consider when equality holds. From the proof, we see that
$G$ must be a $d$-regular graph for some positive integer $d$. We also observe that $|E_{i,i}=0|$ for $1\leq i \leq \frac{2}{\kappa}$.
Thus, $G$ is a bipartite graph. By Proposition \ref{prop:regular_bipartite_cur}, we have $\kappa=\frac{2}{d}$.
Thus $|V(G)|=2^d$. Also, the equality should hold in Lemma \ref{lem:C3C5free}. For any two vertices $x,y$ of distance $i$, we have
$$|\Gamma_x^+(y)|= (1-\frac{i\kappa}{2})d_y =d-i.$$
This implies 
$$|\Gamma_x^-(y)|=d-|\Gamma_x^+(y)|=i.$$

Now we show that $|N_i(x)|= \binom{d}{i}$ for any integer $i$ with $0\le i \le d$ by induction. When $i=0$, $N_0(x)=\{x\}$. Suppose that $|N_j(x)|= \binom{d}{j}$ for every $j$ such that $0\le j<i$. 
Then $$|E_{i-1, i}|= \sum_{u\in N_{i-1}(x)}|\Gamma^+_x(u)|= |N_{i-1}(x)| (d-(i-1))= \binom{d}{i-1}(d-(i-1)).$$ Note that $$|E_{i-1, i}|=\sum_{v\in N_{i}(x)} |\Gamma^-_x(v)|= |N_i(x)| i .$$
Therefore, $$|N_i(x)|= |E_{i-1,i}|/i = \binom{d}{i-1} \frac{d-(i-1)}{i}=\binom{d}{i}.$$

Now, let us establish an isomorphism $g$ between $G$ and $Q_d$.
Label the vertices of $Q_d$ by the subsets of $[d]$.
First, we define $g(x)=\emptyset$. Denote the neighbors of
$x$ by $x_1$, $x_2$, \ldots, $x_d$ and define $g(x_j)=\{j\}$ for $1\leq j\leq d$.

For $2\leq i\leq d$, we now define the map $g$ from $N_i(x)$ to $\binom{[d]}{i}$ by induction on $i$ so that $g$ is an graph isomorphism from $G[\cup_{j=0}^i N_j(x)]$ to $Q_d[\cup_{j=0}^i\binom{[d]}{j}]$.

Suppose that the isomorphism $g$ from $G[\cup_{j=0}^{i-1}N_j(x)]$ to $Q_d[\cup_{j=0}^{i-1}\binom{[d]}{j}]$ has already defined. 
Now we need to define the map $g$ on $N_i(x)$. Given a vertex $v\in N_i(x)$, $v$ has exactly $i$ neighbors in $N_{i-1}(x)$, say $v_1, v_2,\ldots, v_i$. By Lemma \ref{lem:matching}, any pair of $v_j$ and $v_l$, there exists a vertex $v_{jl}\in N_{i-2}(x)$ such that $v_{jl}v_j v v_l$ forms a $C_4$. Since $g$ is an isomorphism from $G[\cup_{j=0}^{i-1}N_j(x)]$ to $Q_d[\cup_{j=0}^{i-1}\binom{[d]}{j}]$,  for any pair $g(v_j)$ and $g(v_l)$ there is a common neighbor in $\binom{[d]}{i-2}$. We conclude that $|\cup_{j=1}^i g(v_j)|=i$. We define
$g(v)=\cup_{j=1}^i g(v_j)$. It is straightforward to verify that $g$ is an isomorphism from $G[\cup_{j=0}^i N_j(x)]$ to $Q_d[\cup_{j=0}^i\binom{[d]}{j}]$. The inductive construction is finished.
Thus, $G$ is isomorphic to $Q_d$.
\end{proof}

{\bf Acknowledgement} This material is based upon work supported by the National Science Foundation under Grant Number DMS 1916439 and DMS 2038080.

\end{document}